\numberwithin{equation}{section}
\newtheorem{theorem}{Theorem}[section]
\newtheorem{lemma}[theorem]{Lemma}
\newtheorem{corollary}[theorem]{Corollary}
\newtheorem{proposition}[theorem]{Proposition}
\newtheorem{definition}{Definition}[section]
\allowdisplaybreaks \numberwithin{equation}{section}
\newcommand{\weglassen}[1]{}
\renewcommand{\imath}{\mathrm{i}}
\begin{document}
\title{$p$-adic generalizations of Hyper-elliptic $\lambda$ functions}
\begin{abstract}We express the branch points cross ratio of Hyper-elliptic Mumford curves as quotients of $p$ adic theta functions 
evaluated at the $p$ adic period matrix
\end{abstract}
\author{ Yaacov Kopeliovich}
\maketitle
\section{introduction}
\footnote{This note woouldn't have been written had it not been for Jeremy Teitelbaum. I thank him for posing the question and his encouragement to pursue it.} 
In this note we generalize $\lambda$ functions of Hyper-elliptic curves that are well known in the complex case,\cite{FK980} to the $p$ adic case. That is given a $p$-adic Mumford hyper-elliptic curve of genus $g$ with the equation: 
\begin{equation}
y^2=x(x-1)\prod_{i=1}^{2g-1}(x-\lambda_i) 
\end{equation} 
we derive formulas expressing branch points of hyper-elliptic curves $\lambda_i$ ( more precisely their cross ratios) for the $p$-adic $\lambda$ as quotients of $p$-adic theta functions.  
The question of obtaining such formulas is a natural question following the paper of \cite{T}. In this paper the author calculated $p$-adic periods for modular curves $X_0(p)$ for certain primes $p$ such that the genus of $X_0(p)$ was 2. The calculation of the periods involved the following 3 steps: 
\begin{enumerate} 
\item The author used $p$ adic theta functions to obtain explicit formulas for $\lambda_1,\lambda_2,\lambda_3$
\item He used these formulas to obtain $\lambda_i$ as infinite series of the elements of the $p-$ adic period matrix 
\item He inverted these series to obtain an expression of the periods as a function of $\lambda_1,\lambda_2,\lambda_3.$
\item He applied those to verify the exceptional zero conjecture for these $X_0(p).$
\end{enumerate} 
We focus on the first part and  obtain formulas for $\lambda_1,...\lambda_{2g-1}$ for arbitrary $g,$ generalizing the formulas obtained in \cite{T} to any genus.
While the derivation of this part in \cite{T} followed a detailed analysis of the fundamental $p$-adic group action on the associated tree and its fundamental domain our analysis will follow the classical approach that was exemplified in \cite{FK980} and \cite{M2}. This approach enables us to write the quotients of participating theta functions instantly provided we know that certain divisors are non-special. In the hyper-elliptic case ( and more generally in the cyclic cover case) we can characterize these non-special divisors that are supported on the branch points of the covering completely. Thus the only task that is left is to compute the $p$-adic characteristics of the images of divisors. We accomplish following the work of \cite{V1},\cite{V2}. 

This note is divided into three sections. In the first section we collect the relevant facts about Mumford curves and $p$ adic theta functions. In the second note we calculate the images of the relevant characteristics of the Jacobian and in the last section we prove our main theorem.

\section{Recap on Mumford curves} 
In this section we gather the relevant facts about Hyper-elliptic Mumford curves above non-Archimedean field. The reader should consult \cite{GV} and \cite{V1} and the references there for proofs of the assertions in this section.Let $k$ a complete non-Archimedean field which is algebraically closed. Let $s_0,...s_g$ in $PGL(2,k)$ be elements of order $2,$ such that the group $\Gamma_0$ generated by them satisfies: 
\begin{enumerate} 
\item $\Gamma_0$ is discontinuous 
\item $\Gamma_0$ is the free product of the groups generated by the elements $s_i.$
\end{enumerate} 
 The kernel of the homomorphism induced $\phi:\Gamma_0\mapsto \{\pm 1\}$ given by $\phi(s_i)=-1$ for all $i$ is called the Whitakker group. $\Gamma$ is a free group on the generators $s_1s_0,...s_gs_0.$ We have the following: 
\begin{proposition}
The groups $\Gamma$,$\Gamma_0$ have the same set of ordinary points denoted by $\Omega.$ The curve $X=\Omega/\Gamma$ is isomorphic analytically to a hyper-elliptic curve $X$  and the curve $\Omega/\Gamma_0$ is isomorphic to $\mathbb{P}^{1}(k).$ Furthermore the mapping $\Pi^{an}:X\mapsto \mathbb{P}^{1}(k)$ is: 
 $x\mod \Gamma=x\mod \Gamma_0.$  
\end{proposition}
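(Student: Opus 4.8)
The plan is to establish the proposition in stages, following the Schottky--Mumford uniformization theory for the relevant groups. First I would recall that since $\Gamma$ is a subgroup of $\Gamma_0$ of index $2$, and the ordinary set $\Omega$ is defined as the complement in $\mathbb{P}^1(k)$ of the limit set (the closure of the set of fixed points of the group elements), the equality of the ordinary sets should follow from the fact that $\Gamma$ and $\Gamma_0$ have the \emph{same} limit set. This in turn reduces to checking that adjoining the finitely many involutions $s_i$ to $\Gamma$ does not enlarge the set of limit points: every fixed point of an $s_i$ is already a limit point of $\Gamma$, because $\Gamma$ being a free group of rank $g$ on the hyperbolic elements $s_is_0$ is discontinuous of Schottky type and its limit set is already closed and perfect. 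Thus $\Omega$ is well defined simultaneously for both groups.

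Next I would address the analytic structure of the two quotients. For $\Gamma_0$, the key point is that it is a free product of groups of order $2$; Mumford's theory (as recalled in \cite{GV}, \cite{V1}) gives that the quotient of $\Omega$ by such a group is analytically $\mathbb{P}^1(k)$, so I would invoke this directly. For $\Gamma$, which is a Schottky group of rank $g$, Mumford's uniformization theorem identifies $\Omega/\Gamma$ with a smooth projective curve $X$ of genus $g$ having totally degenerate reduction. To see that $X$ is \emph{hyper-elliptic}, I would use the involution: the nontrivial element of $\Gamma_0/\Gamma \cong \{\pm 1\}$ acts on $X = \Omega/\Gamma$ as an automorphism $\iota$ of order $2$, and the quotient $X/\iota$ is exactly $\Omega/\Gamma_0 \cong \mathbb{P}^1(k)$. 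A degree-$2$ map to $\mathbb{P}^1(k)$ is precisely the hyper-elliptic (or at low genus, trivially hyper-elliptic) structure, so the induced map realizes the hyper-elliptic involution.

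Finally, the commuting-diagram statement $x \bmod \Gamma = x \bmod \Gamma_0$ records that the analytic map $\Pi^{an}$ is literally induced by the inclusion $\Gamma \hookrightarrow \Gamma_0$: a point of $\Omega/\Gamma$ maps to its further quotient class in $\Omega/\Gamma_0$. I would check this is well defined (the $\Gamma_0$-class of $x$ depends only on its $\Gamma$-class since $\Gamma \subset \Gamma_0$) and that it coincides with the abstractly constructed hyper-elliptic projection by the uniqueness of the quotient map up to the $\mathbb{P}^1(k)$ identification.

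The main obstacle I anticipate is the identification of $X$ as honestly hyper-elliptic rather than merely a genus-$g$ curve with an order-$2$ automorphism. Controlling the ramification of $\iota$ and confirming that the quotient really is $\mathbb{P}^1(k)$ (the genus-$0$ target) requires the specific structure of the Whittaker group and the fact that the $s_i$ are involutions with the right fixed-point configuration; this is where I would lean most heavily on the cited uniformization results of \cite{GV} and \cite{V1}, and on a Riemann--Hurwitz count confirming that $\iota$ has the expected $2g+2$ fixed points upstairs lying over the branch points downstairs.
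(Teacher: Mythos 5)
Your overall route --- equality of the limit sets, Mumford's uniformization of $\Omega/\Gamma$ for the rank-$g$ Schottky group $\Gamma$, identification of the residual action of $\Gamma_0/\Gamma\cong\{\pm 1\}$ with an involution $\iota$ on $X$ whose quotient is $\Omega/\Gamma_0\cong\mathbb{P}^{1}(k)$, and a Riemann--Hurwitz count producing the $2g+2$ branch points --- is exactly the standard argument from the sources the paper itself cites here ([GV], [V1]); the paper gives no proof beyond that citation. However, your justification of the very first step contains a genuine error. You assert that ``every fixed point of an $s_i$ is already a limit point of $\Gamma$.'' This is false, and it contradicts a fact the paper relies on essentially: Section 3 of the paper states that the fixed points $a_i,b_i$ of the involutions $s_i$ are \emph{ordinary} points. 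They must be ordinary, because their images are precisely the ramification points of $X\to\mathbb{P}^{1}(k)$ and they are later fed into the analytic machinery ($u_{a_0,b_0}$, the characters $c_{a_i,a_0}$, the theta quotients), all of which is defined only on $\Omega$. The underlying misconception is your description of the limit set as ``the closure of the set of fixed points of the group elements'': it is the closure of the fixed points of the elements of \emph{infinite} order (equivalently, the set of accumulation points of orbits), and fixed points of torsion elements can perfectly well be ordinary --- for Whittaker groups they are.

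The conclusion you wanted --- that $\Gamma$ and $\Gamma_0$ have the same limit set, hence the same $\Omega$ --- is nevertheless true, and it follows from the finite-index observation you open with rather than from the false claim. Since $\Gamma=\ker\phi$ has index $2$, for any $z$ one has $\Gamma_0 z=\Gamma z\cup\Gamma(s_0z)$, so the accumulation points of a $\Gamma_0$-orbit are a union of accumulation points of two $\Gamma$-orbits, giving that the limit set of $\Gamma_0$ is contained in that of $\Gamma$; the reverse inclusion is trivial. Equivalently: every infinite-order $h\in\Gamma_0$ satisfies $h^2\in\Gamma$, $h^2\neq 1$, with the same pair of fixed points, so the hyperbolic fixed points of $\Gamma_0$ already occur as hyperbolic fixed points of $\Gamma$. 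With the first step repaired in this way, the remainder of your sketch goes through as in the cited references.
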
 
\begin{proof} See \cite{GV} and \cite{V1}.\end{proof}
We denote the automorphism of the hyper-elliptic curve by $\overline{s_0}$. 

Let us briefly recall how the Jacobian of the curve is defined. If $H$ is schottky group of rank $g+1$ and $X_H=\Omega/H$ is the corresponding Mumford curve. Recall that, $\Omega\subset\mathbb{P}^{1}(k)$ is the set of the ordinary points of $H.$ For each $a,b\in \Omega$ define: 
\begin{equation} 
u_{a,b}(z)=\prod_{h\in H}\frac{z-ha}{z-hb}
\end{equation} 
This product defines a meromorphic function on $\Omega$ which satisfies the equation: $c_{a,b}(h)u_{a,b}(z)=u_{a,b}(hz).$ If $b$ doesn't belong to the orbit of $a,$ $h(z)$ has zeros precisely on the orbit of $a$ and poles in the orbit of $b$. If $b=ha,h\in H$  $u_{a,b}$ doesn't depend on $a$ and we will denote these mappings by $u_h$ and $c_h$ respectively. $u_h$ has no zeros or poles. 
For a Schottky group let $G_H=Hom(H,k^{\star}).$ Because $H$ is free we can identify the latter group with $\left({k^{\star}}\right)^{g+1}$ The group $\Lambda_{H}=\left\{c_h|h\in H\right\}$ is a free Abelian group of rank $g+1.$ which is discrete in $G_H.$ Assume $D=\sum_{i=1}^n (a_i-b_i)$ is a divisor on $X_H.$ The mapping:
$$\sum_{i=1}^n(a_i-b_i)\mapsto \prod_{i=1}^n c_{a_i,b_i}$$ induces a mapping from $J(H)$ and the quotient $G_H/\Lambda_H.$ To define the corresponding Jacobian mapping let $p\in \Omega$ define $t_H:X_H\mapsto G_H$ by $t(x)=c_{x,p}.$ This is the canonical embedding that extends to the mapping on divisors. The dual variety $\widehat{J_H}$ can be represented as : $\widehat{G_H}/\widehat{\Lambda_H}$. $\widehat{G_H}=Hom(G_H,k^{\star})$ and $\widehat{\Lambda_H}=\left\{d\in \widehat{G_H}|\exists \alpha\in H, d(c_\gamma)=c_{\alpha}(\gamma)\right\}$
Define the action of  $\Lambda_H$  on $\mathbb{O}^{\star}(G_H\}=\left\{f|f, \text{ holomorphic and nowhere vanishing on, } G_H\right\}$ by $f^{c_\gamma}(c)=f(c_\gamma c).$ Let $\xi\in Z^{1}(\Lambda_H,O^{\star}(G_H))$ be a one cocycle and let $$\L_\xi=\left\{g | g \text{ holomorphic function on } G_H, g(c)=\xi_{c_{\gamma}}(c)g(c_\gamma c),\forall c_\gamma\in \Lambda_H\right\}.$$ 
\begin{definition} 
Elements of $L(\xi)$ are called theta functions of type $\xi.$
\end{definition} 
Now we construct the basic theta function. Let $p_H:\Lambda_H\times \Lambda_H$ be a symmetric bilinear form such that $p_H^2(c_\gamma,c_\delta)=c_\gamma(\delta),\forall \gamma,\delta \in H.$ Define the one cocycle by 
$$\xi_{H,c_\gamma}(c)=p_H(c_\gamma,c_\gamma)c(\gamma),c_\gamma\in \Lambda_H,c\in G_H.$$ In this case, $dim(L_\xi)=1.$ and generated by the Riemann theta function: 
\begin{equation} 
\theta_H(c)=\sum_{c_\gamma\in \Lambda_H}\xi_{H,c_\gamma}(c).
\end{equation}
The divisor of this function is $\Lambda_H$ invariant and hence induces a divisor on $J_H.$ This divisor defines a polarization $\Theta_H$ on $J_H.$ Riemann theorem is valid in the $p$-adic case. More precisely we have that: 
\begin{theorem}\textbf{Riemann Vanishing Theorem} 

\begin{enumerate} 
\item The Holomorphic function $\theta_H\circ t_H$ has an $H$-invariant divisor which regarded as a divisor on $X_H,$ has degree of $g+1$ 
\item If the map $\overline{t_H}:X_H\mapsto J_H$ is based on the point $p\in \Omega$ and if 
$K_H=(div(\theta\circ t_H \mod H\in Div(X_H),$ then $2K_H$ is a canonical divisor. Furthermore the class of $K_H$ under linear equivalence of divisors doesn't depend on choice of $p.$
\item if $c\in G_H$ then $\theta_{H}(c)=0$ if and only if $\overline{c}=\overline{t_H}(D-K_H)$ for some positive divisor $D$ of degree $g.$ The order of vanishing of $t_H$ at $c$ equals to $i(D)$ the index of specialty of $D.$ 
\end{enumerate} 
\end{theorem}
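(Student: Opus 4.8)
The three assertions are intertwined, so the plan is to prove them in the order (1), (3), (2), taking (3) as the analytic core and reading (2) off from it together with the symmetry of $\theta_H$. For \textbf{part (1)} I would first realize $\theta_H\circ t_H$ as an honest holomorphic function $F$ on $\Omega$ by substituting $t_H(z)=c_{z,p}$ into the theta series, $F(z)=\theta_H(c_{z,p})$, where $c_{z,p}$ is assembled from the $u$-functions introduced above. Using the defining functional equation of a theta function of type $\xi_H$, namely $g(c)=\xi_{H,c_\gamma}(c)\,g(c_\gamma c)$, I would verify that $F(hz)$ equals $F(z)$ times an explicit nowhere-vanishing automorphy factor for every $h\in H$; the zero set of $F$ is then $H$-invariant and descends to a well-defined divisor on $X_H=\Omega/H$. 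Its degree is the number of zeros of $F$ in a fundamental domain for the $H$-action on $\Omega$, and counting this number is exactly where non-Archimedean analysis must replace the complex argument principle. I would extract the count from the valuation data attached to the automorphy factors, equivalently by reading off the reduction of $F$ on the tree, and match it with the geometric input that $\overline{t_H}$ pulls $\Theta_H$ back to the principal polarization on $X_H$, whose degree is the genus $g+1$.

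For \textbf{part (3)}, for generic $c\in G_H$ the function $x\mapsto\theta_H(t_H(x)\,c)$ is not identically zero, so by (1) its zero divisor has degree $g+1$; applying the $p$-adic Abel theorem (the well-definedness and injectivity of $\overline{t_H}$ on degree-zero classes, set up above via $\sum(a_i-b_i)\mapsto\prod c_{a_i,b_i}$) shows that $\overline{t_H}$ of this divisor differs from $\bar c$ by a class independent of $c$, which I take as the definition of $K_H$, the $p$-adic analogue of the vector of Riemann constants. To obtain the stated description of the vanishing locus I would argue by two inclusions. First, every effective $D$ of degree $g$ is special on a genus-$(g+1)$ curve: Riemann--Roch gives $h^0(D)-h^0(K_H\text{-type residual})=g-(g+1)+1=0$, hence $i(D)=h^0(D)\geq 1$, and a direct substitution then yields $\theta_H(\overline{t_H}(D-K_H))=0$, so $\overline{t_H}(W_g)-K_H\subseteq\Theta_H$, where $W_g=\overline{t_H}(\mathrm{Sym}^g X_H)$. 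Second, since both sides are irreducible of dimension $g$ inside the $(g+1)$-dimensional $J_H$, they coincide. The refinement that the order of vanishing equals $i(D)$ is the Riemann singularity theorem: I would identify $\operatorname{ord}_c\theta_H$ with the dimension of the space of effective $D$ mapping to $\bar c$, i.e. with $h^0(D)$, and then use Riemann--Roch at degree $g=(g+1)-1$ to rewrite $h^0(D)=i(D)$.

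For \textbf{part (2)}, because $p_H$ is symmetric, $\theta_H$ is even: $\theta_H(c^{-1})$ and $\theta_H(c)$ agree up to an explicit automorphy factor, so the theta divisor $\Theta_H$ is symmetric under inversion $c\mapsto c^{-1}$. Combining this with the description $\Theta_H=\overline{t_H}(W_g)-K_H$ from (3), the inversion involution of $\Theta_H$ corresponds to the map $D\mapsto D''$ with $\overline{t_H}(D'')=\overline{t_H}(2K_H-D)$ on effective divisors of degree $g$; matching this against canonical residuation $D\mapsto K-D$ forces $2K_H$ into the canonical class. I expect the precise matching to require the explicit transformation law of $\theta_H$ under $c\mapsto c^{-1}$, i.e. a short computation with the quadratic form $p_H$ pinning down the relevant theta characteristic. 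Independence of the base point $p$ is then immediate, since replacing $p$ translates $t_H$ by a fixed element of $G_H$ and hence moves $\operatorname{div}(\theta_H\circ t_H)$ only within its linear equivalence class, leaving the class $K_H$ unchanged.

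The \emph{main obstacle} is the genuinely non-Archimedean content of steps (1) and (3): the zero-count and the singularity-multiplicity statement, which over $\mathbb{C}$ rest on the argument principle and on the Riemann--Kempf sharpening of the singularity theorem, neither of which transfers verbatim. Here I would instead lean on the explicit uniformization of van der Put \cite{V1},\cite{V2}: the series $\theta_H(c)=\sum_{c_\gamma\in\Lambda_H}\xi_{H,c_\gamma}(c)$ converges because $\Lambda_H$ is a full-rank lattice in $G_H$, and a dominant-term (Newton-polygon) analysis of it should deliver both the number of zeros and the order of vanishing directly from $p_H$. Verifying that this combinatorial count agrees with the geometric genus and with the index of speciality, that is, that the non-Archimedean theta divisor is a bona fide principal polarization with the classical incidence relations, is the step I expect to absorb most of the effort.
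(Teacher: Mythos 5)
First, a point of comparison: the paper does not prove this theorem at all; its entire proof is the line ``See \cite{V2} and \cite{GV}'', i.e.\ the result is imported as known background from van Steen and Gerritzen--van der Put. So your proposal is not competing with an argument in the paper but attempting to reconstruct the content of those references, and judged on that footing it has genuine gaps at exactly the points where the theorem is hard. The most serious is a circularity in your part (1): you propose to get the degree count by matching a valuation-theoretic zero count against ``the geometric input that $\overline{t_H}$ pulls $\Theta_H$ back to the principal polarization on $X_H$, whose degree is the genus $g+1$.'' But the statement that $\operatorname{div}(\theta_H)$ induces a principal polarization whose pullback under $\overline{t_H}$ has degree $g+1$ is precisely part (1); it cannot be invoked as input. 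The actual argument in \cite{GV} computes the degree of $\operatorname{div}(\theta_H\circ t_H)$ directly from the automorphy behaviour: $\theta_H(t_H(hz))$ differs from $\theta_H(t_H(z))$ by the factor $\xi_{H,c_h}(t_H(z))^{-1}$, and the degree of the induced divisor on $X_H$ is the degree of this character, evaluated combinatorially (on the tree, via the form $p_H$). This is the non-Archimedean replacement for the argument principle, and it has to be carried out, not postulated.

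In part (3), the claim that ``a direct substitution then yields $\theta_H(\overline{t_H}(D-K_H))=0$'' is not a substitution; it is one of the two hard directions of the theorem. The classical route (which is what \cite{V2} adapts) is: for $e$ such that $f_e(x)=\theta_H\bigl(t_H(x)e^{-1}\bigr)$ does not vanish identically, its zero divisor is effective of degree $g+1$ and Abel--Jacobi maps it to $e$ shifted by the constant $K_H$; vanishing at $e=\overline{t_H}(D-K_H)$ is then extracted using evenness of $\theta_H$ (which forces the base point to be a zero of $f_e$), and the degenerate case $f_e\equiv 0$ needs a separate induction. None of this appears in your sketch. Relatedly, your ``both sides are irreducible of dimension $g$, hence coincide'' step is insufficient: $\Theta_H$ is a priori just a divisor, possibly with several components, and containing the irreducible set $\overline{t_H}(W_g)-K_H$ as one component does not exclude others; ruling them out is exactly the converse inclusion, so your two-inclusion plan collapses to one inclusion you have not established. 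Finally, the equality of $\operatorname{ord}_c\theta_H$ with $i(D)$ (the Riemann singularity statement) is deferred to an unspecified ``Newton-polygon analysis''; that is the part that absorbs the real work in \cite{V2} and \cite{GV} and cannot remain a placeholder. In short: your ordering (1), (3), (2) and the Riemann--Roch bookkeeping ($h^0(D)=i(D)$ in degree $g$ on a genus-$(g+1)$ curve) are sound, and part (2) via evenness and base-point independence is essentially right, but every genuinely non-Archimedean step is either circular or missing.
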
 
\begin{proof}
See \cite{V2} and \cite{GV}
\end{proof}
\section{The images of the branch points in the Jacobian} 
We calculate the explicit images of the branch points.
Assume that  $s_0,...s_g$ are generators of $\Gamma_0$ of order $2.$ Then the fixed points of $s_i$ are ordinary points $a_i,b_i.$ Let $\gamma_i=s_is_0,i=1...g$ Our goal is to show the following theorem: 
\begin{theorem} We have the following equatlities:

\begin{enumerate} 
\item $c_{a_0,b_0}(\gamma_i)=-1;i=0,...g$ \label{first}
\item $c_{a_i,a_0}^2=c_{b_i,a_0}^2=c_{\gamma_i}$\label{second}
\item $c_{b_ia_0}=c_{b_ia_i}c_{a_ia_0}$\label{third}
\item $c_{b_ia_i}(\gamma_j)=(-1)^{\delta_{ij}}, i,j\geq 1$ and $\delta_{ij}$ is the Kronecker delta. \label{fourth}
\end{enumerate} 
\end{theorem}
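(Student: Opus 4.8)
The plan is to reduce everything to two facts about the multiplicative cocycle: its additivity in the divisor variable, and the reciprocity/product expansion
\[
c_{a,b}(\delta)=\frac{u_\delta(a)}{u_\delta(b)}=\prod_{\eta\in\Gamma/\langle\delta\rangle}(a,b;\eta x_\delta,\eta y_\delta),\qquad (a,b;c,d):=\frac{(a-c)(b-d)}{(a-d)(b-c)},
\]
where $x_\delta,y_\delta$ are the fixed points of $\delta$, together with the symmetry $c_\gamma(\delta)=c_\delta(\gamma)$. Claim \ref{third} is then immediate: since $u_{b_i,a_0}=u_{b_i,a_i}\,u_{a_i,a_0}$ as products over $\Gamma$, the automorphy factors multiply. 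The other three claims I would attack through the involutions. Because $\Gamma$ has index two in $\Gamma_0$, each $s_i$ normalizes $\Gamma$; from $\gamma_i=s_is_0$ one reads off $s_0\gamma_is_0=\gamma_i^{-1}$ and $s_i\gamma_is_i=\gamma_i^{-1}$, so both $s_0$ and $s_i$ interchange the fixed points $x_i,y_i$ of $\gamma_i$. Since a M\"obius involution fixing $\{P,Q\}$ and swapping $\{R,R'\}$ forces $(P,Q;R,R')=-1$, this yields the harmonic relations $(a_0,b_0;x_i,y_i)=-1$ and $(a_i,b_i;x_i,y_i)=-1$, the source of every minus sign below.

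For Claims \ref{first} and \ref{fourth} I would expand by the product formula and pair the summation index against the normalizing involution. For \ref{first}, applying $s_0$ (which fixes $a_0,b_0$, preserves cross ratios, and sends the coset $\eta$ to $s_0\eta s_0$ while swapping $x_i\leftrightarrow y_i$) shows that the $\eta$-factor equals the inverse of the factor attached to $s_0\eta s_0$. Hence all genuinely paired cosets cancel, and only cosets fixed by $\eta\mapsto s_0\eta s_0$ survive; the trivial coset is such a fixed coset and contributes exactly $(a_0,b_0;x_i,y_i)=-1$, giving $c_{a_0,b_0}(\gamma_i)=-1$. Claim \ref{fourth} is the same computation run with $s_i$ in place of $s_0$: for $i=j$ the trivial coset again survives and contributes $(a_i,b_i;x_i,y_i)=-1$, while for $i\neq j$ the conjugate $s_i\gamma_js_i=\gamma_i\gamma_j^{-1}\gamma_i^{-1}$ moves the fixed points of $\gamma_j$ off the trivial coset, so no diagonal harmonic factor appears and the product collapses to $+1$.

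For Claim \ref{second} I would argue that $a_i-a_0$ is a half of the period of $\gamma_i$. The cleanest route is to form the ratio $u_{a_i,a_0}(z)^2/u_{\gamma_i}(z)$ on $\Omega$ and show, using the involution $s_i$ (which fixes $a_i$ and sends $a_0$ to $\gamma_ia_0=s_ia_0$) together with the harmonic relation, that it is holomorphic, nowhere vanishing and bounded, hence constant; comparing automorphy factors then gives $c_{a_i,a_0}^2=c_{\gamma_i}$ exactly, not merely modulo the period lattice. Concretely one checks that $u_\delta(z)\,u_\delta(s_iz)$ is constant whenever $s_i\delta s_i=\delta^{-1}$, which pins the diagonal entry $\delta=\gamma_i$, and propagates the identity to the remaining generators through the same reciprocity and $\Gamma_0$-symmetry. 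The identical computation with $b_i$ gives $c_{b_i,a_0}^2=c_{\gamma_i}$, and the two square roots are consistent because $c_{a_i,b_i}^2=1$: the points $a_i,b_i$ are both fixed by the single involution $\overline{s_0}$ on $X$, so their difference is $2$-torsion in $J_\Gamma$.

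The main obstacle, in both the sign computations and Claim \ref{second}, is the exact bookkeeping of the ``diagonal'' contributions. In Claims \ref{first} and \ref{fourth} I must prove that the cosets fixed by $\eta\mapsto s\eta s$ contribute a single factor $-1$ (respectively none), i.e.\ that the induced involution on $\Gamma/\langle\gamma\rangle$ has the trivial coset as its unique relevant fixed point; this is where the free-product/ping-pong structure of the Whittaker group enters, through the reduced-word analysis of the solutions of $\eta^{-1}s\eta s\in\langle\gamma\rangle$. In Claim \ref{second} the delicate point is upgrading the equality from ``modulo $\Lambda_\Gamma$'' (the soft statement that differences of Weierstrass points are $2$-torsion) to an exact identity in $G_\Gamma$, which is precisely what the constancy of the explicit $u$-ratio buys. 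I expect the ping-pong sign analysis to be the genuinely hard part, the rest being organized bookkeeping around the two harmonic relations.
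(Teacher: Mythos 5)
Your treatment of parts (1), (3) and (4) is a genuinely different route from the paper's, and it is workable, but as written it is a program rather than a proof. The paper never expands anything over cosets: its normalizer lemma $u_{a,b}(\alpha z)=c\,u_{\alpha^{-1}a,\alpha^{-1}b}(z)$ shows that $c_{a_0,b_0}(\gamma_i)$ equals a single constant attached to $s_0$, whose square is $1$ because $s_0^2=1$; the value $+1$ is excluded because $u_{a_0,b_0}$ would then descend to a one-zero-one-pole function on $X_\Gamma$, forcing genus $0$; and part (4) follows from the factorization $s_0s_j=(s_0s_i)(s_is_j)$ with each factor contributing $-1$. Your Manin--Drinfeld-type cross-ratio product over $\Gamma/\langle\delta\rangle$, paired by $\eta\mapsto s\eta s$ with harmonic factors on fixed cosets, is precisely the explicit, Teitelbaum-style analysis the paper says it wants to avoid; it buys local, computable signs at the cost of the coset combinatorics that you correctly identify as the crux and then leave unproved. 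That crux is, however, standard free-product theory rather than hard ping-pong: $\eta^{-1}s_0\eta s_0\in\langle\gamma_i\rangle$ means $\eta^{-1}s_0\eta=\gamma_i^{n}s_0$; writing $\gamma_i^{2m}s_0=\gamma_i^{m}s_0\gamma_i^{-m}$ and $\gamma_i^{2m+1}s_0=\gamma_i^{m}s_i\gamma_i^{-m}$, the even case forces $\eta\gamma_i^{m}\in C_{\Gamma_0}(s_0)=\langle s_0\rangle$, hence the trivial coset, while the odd case would make $s_0$ conjugate to $s_i$ in $\Gamma_0$, which is impossible in a free product; the analysis with $s_i$ is identical. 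Note also that for part (4) conjugation alone is not well defined on $\Gamma/\langle\gamma_j\rangle$, since $s_i\gamma_js_i=\gamma_i\gamma_j^{-1}\gamma_i^{-1}$; the correct involution is $\eta\langle\gamma_j\rangle\mapsto s_i\eta s_i\gamma_i\langle\gamma_j\rangle$, under which the trivial coset is paired with $\gamma_i\langle\gamma_j\rangle$ when $i\neq j$, as your conclusion requires.

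Part (2) is where you have a genuine gap. The function $u_{a_i,a_0}^2/u_{\gamma_i}$ is not invertible, let alone constant: $u_{\gamma_i}$ has empty divisor, while $u_{a_i,a_0}^2$ has double zeros on the orbit $\Gamma a_i$ and double poles on $\Gamma a_0$, so the first step of your ``cleanest route'' fails. The natural repair, dividing also by the $\Gamma$-invariant pullback of $(x-\lambda_i)/(x-\lambda_0)$, leaves an invertible function with automorphy character $c_{a_i,a_0}^2c_{\gamma_i}^{-1}$; but invertible automorphic functions on $\Omega$ need not be constant (the $u_\gamma$ themselves are not), so ``bounded hence constant'' is unavailable, and this argument yields only the congruence modulo $\Lambda_\Gamma$ that you yourself flag as insufficient. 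Your auxiliary fact that $u_\delta(z)u_\delta(s_iz)$ is constant when $s_i\delta s_i=\delta^{-1}$ is true, but you supply no mechanism by which it pins the lattice element to be $c_{\gamma_i}$; likewise your consistency remark conflates $c_{a_i,b_i}$ being $2$-torsion in $J_\Gamma$ with the exact identity $c_{a_i,b_i}^2=1$ in $G_\Gamma$. The exact identity comes from a short character computation, which is what the paper's proof of part (2) is doing underneath its typos: the normalizer corollary plus the reciprocity $c_{a,b}(\delta)=u_\delta(a)/u_\delta(b)$, together with $s_0a_0=a_0$ and $s_0a_i=\gamma_i^{-1}a_i$, give $c_{a_i,a_0}(s_0\delta s_0)=c_{a_i,a_0}(\delta)\,c_\delta(\gamma_i)^{-1}$ for all $\delta\in\Gamma$; on the other hand $\delta\mapsto s_0\delta s_0$ inverts every free generator $\gamma_j$, so every homomorphism $\chi:\Gamma\to k^{\star}$ satisfies $\chi(s_0\delta s_0)=\chi(\delta)^{-1}$; combining the two equations and using the symmetry $c_\delta(\gamma_i)=c_{\gamma_i}(\delta)$ yields $c_{a_i,a_0}(\delta)^2=c_{\gamma_i}(\delta)$ exactly, and the same computation with $b_i$ finishes part (2). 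I recommend replacing your part (2) argument with this.
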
 
 \begin{proof} 
 We reproduce the proof of the theorem following \cite{V2} in several stages first we show the following lemma: 
\begin{lemma}
Assume $\alpha\in N(\Gamma)$ then $u_{a,b}(\alpha z)=cu_{\alpha^{-1}(a),\alpha^{-1}(b)}(z)$ $c$ depends on $a,b,\alpha.$ 
\end{lemma}
\begin{proof} \textbf{Proof of lemma:} For $\alpha$ as above we let 
\[\alpha=
   \begin{pmatrix} 
      p & q \\ 
      r & s 
   \end{pmatrix}
\]
First assume the $\det\alpha=1$ then we have that \[\alpha^{-1}=
   \begin{pmatrix} 
      s & -q \\ 
      -r & p 
   \end{pmatrix}
\]
\begin{equation} 
u_{a,b}(\alpha(z))=\prod_{\gamma\in \Gamma}\frac{\alpha(z)-\gamma a}{\alpha(z)-\gamma b}=\prod_{\gamma\in\Gamma}
=\prod_{\gamma\in \Gamma}\frac{\frac{pz+q}{rz+s}-\gamma a}{\frac{pz+q}{rz+s}-\gamma b}
=\prod_{\gamma\in \Gamma}\frac{pz+q-	rz\gamma a-s\gamma a}{pz+q-rz\gamma b- s\gamma b }
\end{equation}   
but $z-\alpha^{-1}\gamma a = z-\frac{s\gamma a-q}{-r\gamma a+p}=\frac{-r\gamma az+pz-s\gamma a+q}{-r\gamma a +p}$
Using the last equality we can write the product as: 
$$\prod_{\gamma\in \Gamma}\frac{z-\alpha^{-1}\gamma a }{a-\alpha^{-1} \gamma b }\prod_{\gamma\in \Gamma}\frac{-\gamma a+p}{-r\gamma b + p }$$
Call $c=\prod_{\gamma\in\Gamma}\frac{-r\gamma a+p}{-r\gamma b + p }$ as it doesn't depend on $z.$ Hence we have : 
$\prod_{\gamma\in \Gamma}\frac{\alpha(z)-\gamma a}{\alpha(z)-\gamma b}=c\frac{z-\alpha^{-1}\gamma a }{a-\alpha^{-1} \gamma b }$ Now use the fact that $\alpha^{-1}\gamma\alpha\in \Gamma$ to conclude the lemma. 
 \end{proof}
 We have the following corollary from the lemma: 
\begin{corollary}
$u_{\gamma}(\alpha z)=cu_{\alpha^{-1}\gamma\alpha}(z).$
\end{corollary}  
 \begin{proof}\textbf{of corollary} 
By definition we can choose any $a$ and then we have that: 
\begin{equation}
u_{\alpha a,\gamma \alpha a}(\alpha z) = cu_{ a,\alpha^{-1}\gamma\alpha a}(z)
\end{equation} 
Conclude the corollary from the last equality and the definition of $u_\gamma.$ 
\end{proof}
To continue the proof of the main theorem note that for any two ramification points of the cover $p_,p_j$ we have that $c_{p_i,p_j}$ must be of order $2$ in the Jacobian since we have functions whose divisor is $2P_i-2P_j.$ Indeed if $\lambda_i,\lambda_j$ are the images of these points take the function $\frac{x-\lambda_i}{x-\lambda_j}.$ For $\infty$ we can take the function $x-\lambda_i.$
Let us show the parts of the theorem: 
\begin{proof}\textbf{Proof of part (\ref{first})}
First observe that: $c_{a_0,b_0}(\gamma_i)=\frac{u_{a_0,b_0}(s_0s_ia_i)}{u_{a_0,b_0}(s_0a_i}=c_{a_0,b_0}(s_0)$
But because $s_0^2=1$ we must have that $c_{a_0,b_0}^2=1.$ If $c_{a_0,b_0}=1$ we have that $u_{a_0,b_0}$is $\Gamma$ invariant and induces a meromorphic function on $X_{\Gamma}.$ This function has exactly one zero and one pole and implies that $g(X_\Gamma=0$ a contradiction. 

\textbf{Proof of part(\ref{fourth})}
The previous proof gives us a stronger result that is, $c_{b_ia_i}(s_is_j)=-1,i\neq j$ We conclude immediately that  $c_{b_i,a_i}(s_0s_i)=-1$ if $i\neq j$ we use the fact that $s_i^2=1$ and thus $s_0s_j=s_0s_1s_1s_j$ hence:  $c_{b_i,a_i}(s_0s_j)=c_{b_i,a_i}(s_0s_is_is_j)=c_{b_i,a_i}(s_0s_i)c_{b_i,a_i}(s_0s_j)=(-1)\times(-1)=1$
\end{proof}
\textbf{Proof of part (\ref{second})}
By definition we have that: 
\begin{equation}
c_{a_i,a_0}=\frac{u_\gamma (a_i)^2}{c_{\gamma} (a_0)^2}=\frac{u_\gamma(a_i)u_{\gamma}(s_i^{-1}a_i)}{u_\gamma (s_0^{-1}a_0)}=\frac{u_\gamma(s_0^{-1}\gamma_i)}{u_{\gamma} (s_0^{-1}a_0)}\frac{u_{\gamma}(a_i)}{u_{\gamma}(a_0)}
\end{equation}
By definition of $c_{\gamma}$ we rewrite the last expression as : 
\begin{equation} 
c_{\gamma}(s_0^{-1}\gamma_i^{-1}s_0^{-1})\frac{u_\gamma(a_i)u_{s_0^{-1}\gamma s_0}(a_i)}{u_{\gamma}(a_0)u_{s_0^{-1}\gamma s_0}(a_0)}=c_{\gamma}(\gamma_i)c_{a_i,a_0}(s_0^{-1}\gamma s_0^{-1}\gamma_0^{-1})
\end{equation}
But $s_0^{-1}\gamma s_0^{-1}\gamma_0^{-1}=1$ and because $c_\gamma(\delta)=c_{\delta}(\gamma)$ we complete the proof of part \ref{second}
\end{proof}
We end this section by calculating explicitly Riemann's constant in the $p$ adic case follwoing \cite{V2}. Choose a polarization such that $p_\Gamma(c_{\gamma_i,c_{\gamma_i})}=c_{a_i,a}.$ For this polarization we form the theta function as above: 
\begin{equation} 
\theta_\Gamma(c)=\sum_{c_\gamma\in \Lambda_{\Gamma}}\xi_{\Gamma,c_\gamma}(c).
\end{equation}
and $\xi_{\Gamma,c_\gamma}$ is the cocycle associated with the polarization we defined above. 
We have the following lemma that enables us to determine the zeros of the $\theta$ function we just defined. 
\begin{lemma}\label{coollemma} 
Let $c\in G_{\Gamma}$ such that  $c^2=c_\gamma\in \Lambda_\Gamma$ with $\gamma\not\in[\Gamma,\Gamma]$and such that $c(\gamma)=-p_{\Gamma}(c_\gamma,c_\gamma).$ Then $\theta_{\Gamma}(c)=0.$
\end{lemma}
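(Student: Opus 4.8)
The plan is to read $\theta_\Gamma$ as a multiplicative theta series $\theta_\Gamma(c)=\sum_{c_\delta\in\Lambda_\Gamma} p_\Gamma(c_\delta,c_\delta)\,c(\delta)$ and to exploit its two structural symmetries: \emph{evenness} under $c\mapsto c^{-1}$ and \emph{quasi-periodicity} under translation by $\Lambda_\Gamma$. The hypothesis $c^2=c_\gamma\in\Lambda_\Gamma$ says exactly that $c$ represents a two-torsion point of $G_\Gamma/\Lambda_\Gamma$, i.e. a half-period, and the condition $c(\gamma)=-p_\Gamma(c_\gamma,c_\gamma)$ is the $p$-adic incarnation of that half-period being \emph{odd}. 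The whole argument is then the non-Archimedean transcription of the classical fact that the Riemann theta function vanishes at odd half-periods: one shows that the prescribed conditions force $\theta_\Gamma(c)=-\theta_\Gamma(c)$.

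The first ingredient is the evenness $\theta_\Gamma(c^{-1})=\theta_\Gamma(c)$. Since $\delta\mapsto\delta^{-1}$ is a bijection of $\Lambda_\Gamma$, and since $p_\Gamma$ is a symmetric bilinear form (so $p_\Gamma(c_\delta^{-1},c_\delta^{-1})=p_\Gamma(c_\delta,c_\delta)$) while $c^{-1}(\delta)=c(\delta)^{-1}=c(\delta^{-1})$, reindexing the sum by $\delta\mapsto\delta^{-1}$ returns $\theta_\Gamma(c)$; the non-Archimedean convergence that makes this reindexing legitimate is part of the construction of $\theta_\Gamma$. The second ingredient is that $\theta_\Gamma$ lies in $L_{\xi_\Gamma}$, hence satisfies the functional equation $\theta_\Gamma(c)=\xi_{\Gamma,c_\delta}(c)\,\theta_\Gamma(c_\delta c)=p_\Gamma(c_\delta,c_\delta)\,c(\delta)\,\theta_\Gamma(c_\delta c)$ for every $c_\delta\in\Lambda_\Gamma$, using $\gamma\mapsto c_\gamma$ being a homomorphism so that $c_{\gamma^{-1}}=c_\gamma^{-1}$.

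With these in place the conclusion is immediate: from $c^2=c_\gamma$ we get $c^{-1}=c_\gamma^{-1}c=c_{\gamma^{-1}}c$, so applying the functional equation with $c_\delta=c_{\gamma^{-1}}$ and using $p_\Gamma(c_{\gamma^{-1}},c_{\gamma^{-1}})=p_\Gamma(c_\gamma,c_\gamma)$ gives $\theta_\Gamma(c)=p_\Gamma(c_\gamma,c_\gamma)\,c(\gamma)^{-1}\,\theta_\Gamma(c^{-1})$; invoking evenness and then the hypothesis $c(\gamma)=-p_\Gamma(c_\gamma,c_\gamma)$ collapses the scalar prefactor to $-1$, whence $\theta_\Gamma(c)=-\theta_\Gamma(c)$ and $\theta_\Gamma(c)=0$ (the residue characteristic being odd). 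The condition $\gamma\notin[\Gamma,\Gamma]$ enters to guarantee $c_\gamma\neq 1$, so that $c$ is a genuine nontrivial half-period and the oddness relation $c(\gamma)=-p_\Gamma(c_\gamma,c_\gamma)$ is compatible with $c$ being a homomorphism. The step I expect to require the most care is not the final one-line combination but the bookkeeping in the two ingredients: one must check that the chosen square root $p_\Gamma$ (recall $p_\Gamma^2(c_\gamma,c_\delta)=c_\gamma(\delta)$) is invariant under $\delta\mapsto\delta^{-1}$ and that the sign conventions built into $\xi_\Gamma$ make the functional-equation prefactor exactly $p_\Gamma(c_\gamma,c_\gamma)\,c(\gamma)^{-1}$ rather than merely this up to sign, since it is precisely the exactness of that prefactor that lets the hypothesis produce an unambiguous $-1$.
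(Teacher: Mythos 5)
Your proof is correct and takes essentially the same route as the paper: both arguments use $c^2=c_\gamma$ to identify $c$ with a translate of $c^{-1}$ by $\Lambda_\Gamma$, apply the quasi-periodicity of $\theta_\Gamma$ (whose prefactor collapses to $-1$ exactly by the hypothesis $c(\gamma)=-p_\Gamma(c_\gamma,c_\gamma)$), and invoke evenness $\theta_\Gamma(c^{-1})=\theta_\Gamma(c)$ to conclude $\theta_\Gamma(c)=-\theta_\Gamma(c)=0$. The only differences are expository: you verify the evenness by reindexing the sum and explain the role of $\gamma\notin[\Gamma,\Gamma]$, both of which the paper leaves implicit.
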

\begin{proof}\textbf{Proof of \ref{coollemma}}
We have that: 
\begin{equation}
\theta_{\Gamma}(c)=\theta_{\Gamma}(c^{-1}c_\gamma)=\xi^{-1}_{\Gamma,c_\gamma}(c^{-1})\theta_{\Gamma}(c^{-1})
\end{equation}  
But $\xi^{-1}_{\Gamma,c_\gamma}(c^{-1})=p_\Gamma(c_\gamma,c_\gamma)\times c(\gamma)^{-1}=-1$ because $\theta_\Gamma(c)$ is an even function the assertion follows. 
\end{proof} 
We apply the last lemma to $c_{a_0,a_i}$ and $c_{a_0,b_i}$ to obtain the following corollary: 
\begin{corollary}
Under the choice of the polarization the zeros of $\theta_{\Gamma}$ are the points $b_i.$
\end{corollary}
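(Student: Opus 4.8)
The plan is to feed the Abel--Jacobi images of the branch points into Lemma~\ref{coollemma}. I base the canonical embedding $t_\Gamma$ at $a_0$, so that $t_\Gamma(b_i)=c_{b_i,a_0}$ and $t_\Gamma(a_i)=c_{a_i,a_0}$, and I check the hypotheses of Lemma~\ref{coollemma} for the element $c=c_{b_i,a_0}\in G_\Gamma$, one index $i=1,\dots,g$ at a time. By part~(\ref{second}) of the Theorem we have $c_{b_i,a_0}^{\,2}=c_{\gamma_i}\in\Lambda_\Gamma$, so in the notation of the lemma $\gamma=\gamma_i$; and since $\Gamma$ is free on $\gamma_1,\dots,\gamma_g$, no generator lies in $[\Gamma,\Gamma]$, which settles the requirement $\gamma_i\notin[\Gamma,\Gamma]$ at once.

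The crux is the sign condition $c(\gamma_i)=-p_\Gamma(c_{\gamma_i},c_{\gamma_i})$. Using part~(\ref{third}) I factor $c_{b_i,a_0}=c_{b_i,a_i}\,c_{a_i,a_0}$ and evaluate at $\gamma_i$:
\[
c_{b_i,a_0}(\gamma_i)=c_{b_i,a_i}(\gamma_i)\,c_{a_i,a_0}(\gamma_i).
\]
Part~(\ref{fourth}) gives $c_{b_i,a_i}(\gamma_i)=(-1)^{\delta_{ii}}=-1$, while the polarization is normalized precisely so that the chosen square root is $p_\Gamma(c_{\gamma_i},c_{\gamma_i})=c_{a_i,a_0}(\gamma_i)$; this is a legitimate square root because part~(\ref{second}) yields $c_{a_i,a_0}(\gamma_i)^2=c_{\gamma_i}(\gamma_i)=p_\Gamma^{\,2}(c_{\gamma_i},c_{\gamma_i})$. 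Combining these, $c_{b_i,a_0}(\gamma_i)=-p_\Gamma(c_{\gamma_i},c_{\gamma_i})$, which is exactly the hypothesis of Lemma~\ref{coollemma}; the lemma then gives $\theta_\Gamma(c_{b_i,a_0})=0$. Since $\theta_\Gamma$ is even the conclusion is equally valid for $c_{a_0,b_i}$. Thus each $b_i$ lies in the zero divisor of $\theta_\Gamma\circ t_\Gamma$ on $X_\Gamma$.

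To see that the $b_i$ exhaust the zeros I run the identical computation for $c=c_{a_i,a_0}$: again $c^2=c_{\gamma_i}$, but now $c_{a_i,a_0}(\gamma_i)=+p_\Gamma(c_{\gamma_i},c_{\gamma_i})$, so the sign hypothesis fails and the lemma forces no vanishing at the $a_i$. To turn this contrast into an exact statement I appeal to the Riemann Vanishing Theorem: the divisor of $\theta_\Gamma\circ t_\Gamma$, regarded on $X_\Gamma$, is effective of degree equal to the genus $g$, and by the index-of-speciality clause its support consists of ramification points. As $b_1,\dots,b_g$ are $g$ distinct such zeros and the total degree is $g$, the containment $\operatorname{div}(\theta_\Gamma\circ t_\Gamma)\ge b_1+\dots+b_g$ is forced to be an equality, which is the corollary.

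The step I expect to be the true obstacle is twofold. First, one must pin down the correct branch of the square root in the normalization of $p_\Gamma$, since the entire dichotomy between $a_i$ (sign $+1$) and $b_i$ (sign $-1$) turns on this single choice; the wrong branch simply interchanges the two families. Second, Lemma~\ref{coollemma} by itself only yields containment of the $b_i$ in the theta divisor, so completeness --- that these $g$ zeros account for the whole divisor with nothing extra --- genuinely needs the degree count and non-speciality furnished by the Riemann Vanishing Theorem, not anything internal to the lemma.
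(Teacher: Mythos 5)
Your proposal is correct and follows essentially the same route as the paper: the paper obtains the corollary precisely by applying Lemma~\ref{coollemma} to $c_{a_0,a_i}$ and $c_{a_0,b_i}$, using parts (\ref{second})--(\ref{fourth}) of the theorem together with the normalization $p_\Gamma(c_{\gamma_i},c_{\gamma_i})=c_{a_i,a_0}(\gamma_i)$, exactly as you do, and the exhaustion of the zeros by the $g$ distinct points $b_i$ rests on the degree count from the Riemann Vanishing Theorem, which the paper leaves implicit and you correctly supply. Your only superfluous step is the remark that the index-of-speciality clause forces the support to lie on ramification points --- the degree count alone already clinches the equality --- but this does not affect the validity of the argument.
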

We note that more generally for any polarization we have $c_{b_i,a}=-c_{a_i,a}=\pm p_\Gamma\left(c_{\gamma_i},c_{\gamma_i}\right)$ thus the zeros of $\theta_{\Gamma,p}$ correspond to the points $a_i,b_i.$
We will show in the next section that any divisor of the form $a_{i_1}...a_{i_k},b_{i_{k+1}}...b_{i_g}$ and $a_{i_k},b_{i_l}$ are distnict is a non-special divisor. Combining this with Riemann's theorem we get the following: 
\begin{theorem} 
Under the choice of polarization above $K_{\Gamma}=\sum_{i=1}^kb_g.$ in the $J(X).$ 
\end{theorem}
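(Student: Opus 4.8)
The plan is to combine the Riemann vanishing theorem of Section~2 with the explicit zeros of $\theta_\Gamma$ furnished by the corollary just above, and with the non-speciality of branch-point divisors announced for the next section. Since the Whittaker group $\Gamma$ is free of rank $g$, the curve $X$ has genus $g$, so $K_\Gamma$ is a divisor class of degree $g-1$ with $2K_\Gamma$ canonical; the target is therefore to identify this class with the sum $\sum_{i=1}^{g-1}b_i$ of $g-1$ of the Weierstrass points. The point of the argument is that a theta characteristic on a hyperelliptic curve is forced to be a sum of branch points, and the choice of polarization $p_\Gamma(c_{\gamma_i},c_{\gamma_i})=c_{a_i,a}$ selects \emph{which} branch points occur.

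First I would fix the Abel--Jacobi embedding $t_\Gamma(x)=c_{x,a}$ based at the branch point $a$ used in the polarization, and record from the corollary and the remark following it that $\theta_\Gamma$ vanishes at the half-periods $c_{b_i,a}$. By part~(\ref{second}) these satisfy $c_{b_i,a}^2=c_{\gamma_i}$, so they are genuine square roots of the lattice elements $c_{\gamma_i}$, and the sign condition $c(\gamma_i)=-p_\Gamma(c_{\gamma_i},c_{\gamma_i})$ of Lemma~\ref{coollemma} is exactly what distinguishes the $b_i$ from their companions $a_i$; this is the mechanism by which the polarization pins down the particular Weierstrass points entering $K_\Gamma$.

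Next I would feed each such zero into Riemann's theorem: the vanishing $\theta_\Gamma(c_{b_i,a})=0$ forces $c_{b_i,a}=\overline{t_\Gamma}(D_i-K_\Gamma)$ for an effective divisor $D_i$ of degree $g-1$, the order of vanishing being the index of specialty $i(D_i)$. Here the non-speciality of divisors supported on distinct branch points, to be established in the next section, enters in two ways: it forces the zeros to be simple, and it makes the effective representative $D_i$ in the class of $K_\Gamma+b_i-a$ unique, so that the relation $K_\Gamma+b_i-a\sim D_i\ge 0$ determines a genuine divisor rather than merely a class.

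Finally I would solve for $K_\Gamma$ using the fact, already noted before the proof of the first theorem, that every pair of branch points differs by a $2$-torsion class, i.e. $2b_i\sim 2a\sim g^{1}_{2}$. Substituting the candidate $K_\Gamma=\sum_{i=1}^{g-1}b_i$ one checks, via these $2$-torsion relations, that $K_\Gamma+b_i-a$ is linearly equivalent to an effective divisor of degree $g-1$ for each $b_i$ produced in the corollary, and that $2K_\Gamma\sim(g-1)g^{1}_{2}$ is the canonical class; since the symmetric theta divisor is determined by the set of half-periods it contains, this forces the asserted equality. I expect the main obstacle to be precisely this last bookkeeping: one must verify that the effective divisors delivered by Riemann's theorem are mutually consistent and assemble into a single characteristic, and in particular that the polarization-dependent sign in Lemma~\ref{coollemma} distributes the Weierstrass points so that the class $\sum_{i=1}^{g-1}b_i$, and not some mixture of the $a_i$ and $b_i$, is obtained. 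Carrying this out requires the explicit computation of the characteristics from Section~2 and the non-speciality criterion to be used in tandem.
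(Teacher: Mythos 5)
Your second paragraph does recover the paper's key input: the vanishing of $\theta_\Gamma$ at the half-periods $c_{b_i,a_0}$, obtained from Lemma~\ref{coollemma} together with parts (\ref{second})--(\ref{fourth}) of the theorem on characteristics, which is exactly the corollary the paper states just before this theorem. The gap is in how you convert that vanishing into the identity for $K_\Gamma$, and it starts with what $K_\Gamma$ \emph{is}. In this paper $K_\Gamma$ is not an abstract degree-$(g-1)$ theta characteristic: by part (2) of the Riemann vanishing theorem of Section~2 it is, by definition, the divisor of the function $\theta_\Gamma\circ t_\Gamma$ on $X$, and by part (1) that divisor has degree equal to the rank of $\Gamma$, which is the genus $g$. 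The paper's proof is then a one-line degree count: the corollary exhibits the $g$ distinct points $b_1,\dots,b_g$ as zeros of $\theta_\Gamma\circ t_\Gamma$ based at $a_0$; the zero divisor has total degree $g$; hence it equals $b_1+\cdots+b_g$ exactly, and that divisor is $K_\Gamma$ (non-speciality of branch-point divisors is what is invoked to guarantee, via part (3), that this restriction does not degenerate and the zeros are simple). So the intended statement is $K_\Gamma=\sum_{i=1}^{g}b_i$, of degree $g$ --- consistent with Section~4, where $K_{P_0}$ is taken to be $P_3+P_5+\cdots+P_{2g+1}$, a sum of $g$ branch points --- not the degree-$(g-1)$ class $\sum_{i=1}^{g-1}b_i$ you set out to identify. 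By recasting $K_\Gamma$ in the classical normalization you both change the statement and discard the definition that makes the proof immediate.

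Your substitute for the degree count does not close. You check that the candidate class is \emph{consistent} with the vanishing data (each $K_\Gamma+b_i-a_0$ effective, $2K_\Gamma$ canonical) and then conclude because ``the symmetric theta divisor is determined by the set of half-periods it contains.'' That principle is proved nowhere and is not available from anything in the paper; and even if one granted it, you could not apply it, because you only know that the $g$ points $c_{b_i,a_0}$ lie on the theta divisor, not the full set of half-periods lying on it. Consistency of one candidate with partial vanishing data cannot force equality: any characteristic built from branch points satisfying the same $2$-torsion relations passes the same checks, so your bookkeeping pins down $K_\Gamma$ at best up to translation by a $2$-torsion point. The missing idea is precisely the step the paper leans on: identify $K_\Gamma$ with the zero divisor of $\theta_\Gamma\circ t_\Gamma$ and count degrees against the $g$ zeros supplied by the corollary; with that step the theorem is immediate, and without it the identification is not achieved.
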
 
\section{Uniformizations of Hyper-elliptic curves} 
We calculate the degree of vanishing For any given a divisor $\zeta$ of degree $g$ supported on the branch points $\left\{a_1,...a_g,b_1...b_{g-1}\right\}$. We use that to obtain uniformizations of the cross ratio generalizing the expressions in the usual case of complex analysis. As our theorem won't distinguish $a_1,...a_g,b_1,...b_g$ we denote them by $P_1,...P_{2g}$ and assume that $P_1=x^{-1}(\infty)$ and $P_2=x^{-1}(0).$
\begin{theorem} \label{mmm}
Let $r,s$ be a non-negative integers such that $2s+r=g-1.$ Then for every choice of branch points $P_{i_1}...P_{i_r},P_{j_1},...P_{j_s}$ we have that $i(2\sum_{k=1}^rP_{i_k}\sum_{l=1}^sP_{j_l})=s.$
\end{theorem}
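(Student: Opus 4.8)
The plan is to reduce the statement to a finite–dimensional linear–algebra count over the space of holomorphic differentials of $X$, exactly as in the classical complex picture, because $X$ is an honest smooth projective hyper-elliptic curve over $k$ and the index of specialty is a purely algebraic invariant $i(D)=\dim H^{0}\!\left(X,\Omega^{1}_{X}(-D)\right)=\ell(K_{\Gamma}-D)$, insensitive to the non-Archimedean nature of $k$. First I would record that the $i(D)$ occurring in the Riemann Vanishing Theorem of the previous section coincides with this algebraic $\ell(K_{\Gamma}-D)$, so that the differential count below is legitimate $p$-adically. Then I would fix the affine model $y^{2}=x(x-1)\prod_{i=1}^{2g-1}(x-\lambda_{i})$ and use the standard basis $\omega_{m}=x^{m-1}\,\tfrac{dx}{y}$, $m=1,\dots ,g$; every holomorphic differential is thus $\omega_{\phi}=\phi(x)\,\tfrac{dx}{y}$ with $\deg\phi\le g-1$, a $g$-dimensional space, and $i(D)$ is the dimension of the subspace of those $\omega_{\phi}$ with $\operatorname{div}(\omega_{\phi})\ge D$.

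The key local input is the order of vanishing of $\omega_{\phi}$ at a branch point. At a finite branch point $P$ with $x(P)=c$ the function $y$ is a uniformizer, $\operatorname{ord}_{P}(x-c)=2$ and $\tfrac{dx}{y}$ is a unit, so $\operatorname{ord}_{P}\omega_{\phi}=2\,\operatorname{ord}_{c}\phi$; at $P_{1}=x^{-1}(\infty)$ one computes $\operatorname{ord}_{P_{1}}\omega_{\phi}=2g-2-2\deg\phi$. The decisive observation is that these orders are always \emph{even}. Consequently a branch point entering $D$ with multiplicity one and one entering with multiplicity two impose exactly the same single linear condition on $\phi$, namely divisibility of $\phi$ by the factor $(x-c)$ (respectively the drop $\deg\phi\le g-2$ at $P_{1}$): requiring $\operatorname{ord}_{P}\omega_{\phi}\ge 1$ already forces $\operatorname{ord}_{c}\phi\ge 1$, which in turn already gives $\operatorname{ord}_{P}\omega_{\phi}\ge 2$. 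In other words, doubling a branch point is ``free'' as far as the conditions on $\phi$ are concerned.

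Next I would assemble the conditions. Since distinct branch points have distinct $x$-coordinates, the associated linear factors are coprime, so the conditions coming from the different points of $\operatorname{supp}(D)$ are linearly independent; writing $\Psi(x)$ for the product of the distinct linear factors attached to the finite branch points in $\operatorname{supp}(D)$, the admissible $\phi$ are precisely $\phi=\Psi\chi$ with $\deg\chi$ bounded by the remaining slack (and the case $P_{1}\in\operatorname{supp}(D)$ handled through the degree constraint rather than a root, but contributing the same single condition). A dimension count then yields $i(D)=g-\#\{\text{distinct branch points in }\operatorname{supp}(D)\}$. The support of the divisor in the statement consists of the $r+s$ distinct branch points occurring in it, so $i(D)=g-(r+s)$, and substituting the numerical relation tying $r$, $s$, and $g$ (together with the correct bookkeeping at the distinguished points $P_{1}=x^{-1}(\infty)$ and $P_{2}=x^{-1}(0)$) collapses this to $i(D)=s$, which is the assertion.

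The hard part will be precisely this even-order phenomenon together with the accounting at the distinguished points: one must verify that a doubled branch point genuinely costs no more than a simple one, that the point at infinity enters through a degree bound rather than through a root, and that the total tally of independent conditions is exactly the number of distinct branch points met, so that the final count comes out to $s$ rather than $s\pm 1$. The remaining steps—writing down the differential basis, reading off the two local order formulas, and the coprimality/independence argument—are routine once this central point is pinned down.
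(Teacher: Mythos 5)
Your core computation is correct, and it takes a genuinely different route from the paper's. You compute $i(U)$ directly as the dimension of the space of holomorphic differentials $\phi(x)\,dx/y$, $\deg\phi\le g-1$, vanishing on $U$, using the evenness of vanishing orders at branch points. The paper instead converts $i(U)$ into $r(U^{-1})$ by Riemann--Roch on $X$, decomposes $\mathcal{O}(-U)$ into the $\pm 1$ eigenspaces $V_0,V_1$ of the hyper-elliptic involution, identifies $V_0$ (and $V_1$, after dividing by $y$) with spaces of functions on $\mathbb{P}^{1}(k)$ via the quotient map $\pi$, and finishes with Riemann--Roch on the line. Your ``doubling a branch point is free'' observation is the same phenomenon the paper captures through the eigenspace decomposition; your version is more elementary and self-contained, while the paper's has the advantage of generalizing to cyclic covers, which the author flags as future work.

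The genuine gap is your last step. Your (correct) count gives $i(U)=g-(r+s)$, the support containing $r+s$ distinct branch points whether or not $P_1=x^{-1}(\infty)$ occurs; by your own analysis the point at infinity imposes exactly one condition, just like a finite branch point, so no ``bookkeeping at the distinguished points'' can alter this. Substituting the stated constraint $2s+r=g-1$ then yields $i(U)=s+1$, not $s$: the claimed collapse to $s$ is an arithmetic error. In fact the statement as printed is false: for $g=2$, $r=1$, $s=0$ it asserts $i(2P)=0$ for a branch point $P$, yet $2P$ is a canonical divisor on a genus-$2$ hyper-elliptic curve, so $i(2P)=1$. The inconsistency originates in the paper itself --- its own proof step $i(U)=r(U^{-1})-1$ tacitly assumes $\deg U=2r+s=g$, which contradicts $2s+r=g-1$, and under $\deg U=g$ the answer is $i(U)=r$, so $r$ and $s$ have been interchanged somewhere in the statement. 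The honest conclusion of your computation is the corrected formula $i(U)=g-(r+s)$ (which is exactly what the subsequent corollary on non-special divisors requires, taking $g$ distinct simple points); you should have flagged the contradiction with the printed claim rather than asserting it follows.
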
 
\begin{proof}\textbf{Proof of Theorem 4.1}.
Let $U=2\sum_{k=1}^rP_{i_k}\sum_{l=1}^sP_{j_l}.$ Apply the R.R theorem:  
Using the Riemann Roch theorem we have : $i(U)=g-1-degU+r(U^{-1}).$ In our case we have : 
$i(U)=r(U^{-1})-1$ and hence it is enough to show that $r(U^{-1})=s+1.$ The divisor $U^{-1}$ is a divisor that is invariant under the action of the automorphism hence we can decompose this $\mathcal{O}(U^{-1})$ into a direct sum: $\mathcal{O}(-U)=V_0(-U)\bigoplus V_1(-U).$ Where $V_0,V_1$ are the eigenspaces of eigenvalues $\pm{1}.$  Now if $f\in V_1,$ $\frac{f}{y}$ is also invariant under the action of the automorphism and thus is isomorphic to $V_0(-U-div y)$ and $V_0(\mathcal{O}((-U-div y)$ is the subspace of $\mathcal{O}(-(U-div y))$ fixed by the automorphism of the hyper-elliptic curve. Now if $D=\sum_{a_i}P_i,a_i=0,1$ $\pi$ induces an isomorphism between $V_0(\mathcal(O)(-D)$ and $\mathcal(O)(-D_0)$ and $D_0=\sum_{i}a_i\pi(P_i)$ and $P_i$ are the branch points. Now apply Riemann Roch to $\mathbb{P}^{1}(k).$ to conclude the result. 
\end{proof}
We have the following corollary 
\begin{corollary} 
Let $D=\sum_{j=1}^k\overline{a_{i_j}}\sum_{m=1}^l \overline{b_{i_m}}$, $k+l=g(X),i_j,i_m\neq 0 $ then $D$ is a non-special divisor. 
\end{corollary}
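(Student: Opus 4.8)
The plan is to show $i(D)=0$, i.e. that $\ell(K_X-D)=0$ for a canonical divisor $K_X$, which by Riemann–Roch is exactly what non-speciality of the degree-$g$ divisor $D$ means. The divisor $D=\sum_{j=1}^k\overline{a_{i_j}}+\sum_{m=1}^l\overline{b_{i_m}}$ is supported on $g=k+l$ pairwise distinct branch points, each with multiplicity one, and since every branch point is fixed by the hyperelliptic involution $\overline{s_0}$, the divisor $D$ is $\overline{s_0}$-invariant. I would therefore run exactly the eigenspace argument already used in the proof of Theorem \ref{mmm}, but applied to $\mathcal{O}(K_X-D)$ rather than to $\mathcal{O}(-U)$, so as to reduce the computation of $i(D)$ to a counting problem on $\mathbb{P}^1(k)$.

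Concretely, I would normalise so that the index-$0$ points $\overline{a_0},\overline{b_0}$ are the fibres over $\infty$ and $0$, and write $P_\infty$ for the point over $\infty$; the hypothesis $i_j,i_m\neq 0$ then guarantees that $D$ avoids $P_\infty$, so all $g$ points of $D$ have pairwise distinct finite $x$-coordinates $\mu_1,\dots,\mu_g$. Using the order-two relations for branch points recorded in Section 3, namely $2\overline{a_i}\sim 2\overline{b_i}\sim 2P_\infty$, one gets $K_X\sim(2g-2)P_\infty$, and the space $\ell(K_X-D)$ splits under $\overline{s_0}$ into an even and an odd part. The even part is spanned by polynomials $p(x)$ of degree at most $g-1$ that vanish at each $\mu_t$, while the odd part, consisting of sections of the form $y\,q(x)$, is forced to be zero because $y$ already has a pole of order $2g+1$ at $P_\infty$, exceeding the allowed order $2g-2$. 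This is precisely the analogue of pushing $V_0$ and $V_1$ down to $\mathbb{P}^1(k)$ as in Theorem \ref{mmm}.

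The argument then closes at once: a polynomial of degree at most $g-1$ vanishing at the $g$ distinct points $\mu_1,\dots,\mu_g$ is identically zero, so the even part is trivial, the odd part was already trivial, and hence $\ell(K_X-D)=i(D)=0$. I expect the only delicate point to be the bookkeeping at the points over $0$ and $\infty$, that is, confirming that the odd $y$-component genuinely drops out and that the number of vanishing conditions is exactly $g$ (this is the same counting that must be tracked in Theorem \ref{mmm}); everything else is formal once the $p$-adic Riemann–Roch and the $\overline{s_0}$-eigenspace decomposition invoked in Section 4 are in force. As a cross-check I would also record the classical reformulation: an effective divisor of degree at most $g$ on a hyperelliptic curve is special precisely when it contains a fibre $P+\overline{s_0}P$ of the double cover, and since $D$ is a sum of distinct branch points, each self-conjugate and of multiplicity one, it contains no such fibre, giving $i(D)=0$ directly.
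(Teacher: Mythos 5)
Your proof is correct, and it runs on the paper's own engine --- the $\pm 1$-eigenspace decomposition under $\overline{s_0}$ followed by pushdown to $\mathbb{P}^{1}(k)$ --- but executed on the Serre-dual side, which is a genuine (if modest) variation. The paper obtains the corollary as the ``no doubled points'' case of Theorem \ref{mmm}: it computes $r(U^{-1})=s+1$ by decomposing $L(U)$ and then converts to $i(U)$ via Riemann--Roch. You instead compute $i(D)=\ell(K_X-D)$ directly, using $K_X\sim(2g-2)P_\infty$, where non-speciality becomes the transparent fact that a polynomial of degree at most $g-1$ cannot vanish at $g$ distinct points $\mu_1,\dots,\mu_g$, the odd part having died on pole-order grounds at $P_\infty$. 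This buys two things: no Riemann--Roch conversion at the end, and independence from the statement of Theorem \ref{mmm}, which as printed is garbled (its constraint $2s+r=g-1$ is incompatible with the $\deg U=g$ that its own proof uses, and the roles of $r$ and $s$ are tangled), so that deducing the corollary from it literally requires first repairing the theorem --- your route sidesteps that entirely. Two points to tighten if you write this up: (i) in the odd part, note first that $q$ must be a polynomial (any finite pole of $q$ would give $y\,q(x)$ a pole away from $P_\infty$, which $K_X-D$ does not permit), after which the comparison $2g+1+2\deg q>2g-2$ finishes it; (ii) your closing cross-check is stated slightly too broadly: the criterion ``special iff the divisor contains a fibre $P+\overline{s_0}P$'' holds for effective divisors of degree exactly $g$, not ``at most $g$'' (every effective divisor of degree $<g$ is special); since $\deg D=g$ here, the cross-check stands and is indeed a correct one-line alternative proof.
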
  
We are going to use the last corollary in the last section to obtain uniformizations of Hyper-elliptic curves generalizing the work at \cite{T}. Let us consider two seqeunces of $\mathbf{P_1},\mathbf{P_2}$ of the set $1,...2g$ such that they will differ exactly in one element. For example if we have a genus $2$ curve we can choose: 
$\mathbf{P_1}=\left\{1,2\right\},\mathbf{P_2}=\left\{1,3\right\}.$ To ease notation let us switch from $a_i,b_i$ notation to $P_i$ notation. In this case $a_0,a_1,...a_g$ will be denoted by the even indices i.e. they will correspond to the points $P_0,P_2...P_{2g+2}$ and $b_1,...b_g$ correspond to the points $P_1,P_3,...P_{2g+1}.$ So in this case $P_0$ is the base of our mapping and $P_3...P_{2g+1}$ corresponds to $K_{P_0}.$ We will also require that the sequences corresponding to $\mathbf{P_1},\mathbf{P_2}$ will not be equal to the sequence $P_3P_5...P_{2g+1}.$ Now let us define: $\theta_{\Gamma,\mathbf{P_j}}(c)=\theta_\Gamma(c-\sum_{P_i\in \mathbf{P_j}}P_i+K_{P_0}).$
$\forall P\in X$ Consider the mapping: 
\begin{equation} 
P\mapsto \frac{\theta_{\Gamma,\mathbf{P_1}}^2(t_\Gamma(P))}{\theta_{\Gamma,\mathbf{P_2}}^2(t_\Gamma(P))}
\end{equation} 
According to Riemann's theorem the zero and poles of these functions are: 
$$
\frac{2\sum_{P_i\in \mathbf{P_1}}P_i}{2\sum_{P_k\in \mathbf{P_2}}P_k}=\frac{2P_l}{2P_m}
$$
and $P_l,P_m$ are the unique points which are different in $\mathbf{P_1}$ and $\mathbf{P_2}.$ Hence we have that: 
$\frac{\theta_{\Gamma,\mathbf{P_1}}^2(t_\Gamma(P))}{\theta_{\Gamma,\mathbf{P_2}}^2(t_\Gamma(P))}=C\frac{z(P)-\lambda_l}{z(P)-\lambda_m}$
To find the constant choose any $P_h$ such that $P_h\not\in \mathbf{P_1}\cup \mathbf{P_2}.$ we have that:   
$\mathbf{P_1}$ and $\mathbf{P_2}.$ Hence we have that: 
$$\frac{\theta_{\Gamma,\mathbf{P_1}}^2(t_\Gamma(P_h))}{\theta_{\Gamma,\mathbf{P_2}}^2(t_\Gamma(P_h))}=C\frac{\lambda_h-\lambda_l}{\lambda_h-\lambda_m}$$ Hence 
$$
\frac{\lambda_h-\lambda_m}{\lambda_h-\lambda_l}\frac{\theta_{\Gamma,\mathbf{P_1}}^2(t_\Gamma(P_h))}{\theta_{\Gamma,\mathbf{P_2}}^2(t_\Gamma(P_h))}=C
$$
Hence we obtained the following theorem: 
\begin{theorem} 
For any partitions $\mathbf{P_1},\mathbf{P_2}$ as above let the pre-images of $\lambda_l,\lambda_m$ be the unique points such that $\lambda_l\not\in \mathbf{P_2}$ and $\lambda_m\not\in \mathbf{P_1}.$ assume $\lambda_h,\lambda_k$ any points such that $\lambda_h,\lambda_l\not\in \mathbf{P_1}\cup\mathbf{P_2}.$ We have that: 
\begin{equation}
\frac{\theta_{\Gamma,\mathbf{P_1}}^2(t_\Gamma(P_m))^2}{\theta_{\Gamma,\mathbf{P_2}}^2(t_\Gamma(P_m))^2}\frac{\theta_{\Gamma,\mathbf{P_2}}^2(t_\Gamma(P_h))}{\theta_{\Gamma,\mathbf{P_1}}^2(t_\Gamma(P_h))^2}
=\frac{\lambda_h-\lambda_m}{\lambda_h-\lambda_l}\frac{\lambda_k-\lambda_m}{\lambda_k-\lambda_l}
\end{equation}
\end{theorem}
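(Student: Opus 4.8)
The plan is to recognize the left-hand side as the value, at two auxiliary branch points, of a single meromorphic function on $X$ whose divisor can be read off from the Riemann Vanishing Theorem, then to match that function with a rational function of the hyper-elliptic coordinate and eliminate the resulting constant. Write $z=\Pi^{an}$ for the degree-two projection to $\mathbb{P}^1(k)$, so that $z(P_i)=\lambda_i$, and set
\[
F(P)=\frac{\theta_{\Gamma,\mathbf{P_1}}^2(t_\Gamma(P))}{\theta_{\Gamma,\mathbf{P_2}}^2(t_\Gamma(P))}.
\]
The first thing I would check is that $F$ descends to a genuine meromorphic function on $X$. Each factor $\theta_{\Gamma,\mathbf{P_j}}\circ t_\Gamma$ is a theta function of a translated type, and because $\mathbf{P_1}$ and $\mathbf{P_2}$ differ in a single index the two types differ by the character attached to the class $c_{P_l,P_m}$, which is $2$-torsion in $J(X)$ by the computation of Section 3. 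Squaring trivializes this character, so the quotient of squares is $\Lambda_\Gamma$-invariant and defines a function on $X$.

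The core step is the divisor computation. By the Riemann Vanishing Theorem the translated function $\theta_{\Gamma,\mathbf{P_j}}\circ t_\Gamma$ has, on $X$, a divisor of degree equal to the genus $g$, and its multiplicity at a point is the index of specialty of the associated divisor furnished by Riemann's theorem. This index is exactly what Theorem \ref{mmm} computes: the corollary following it shows that divisors of degree $g$ supported on distinct branch points of the form $\sum\overline{a}+\sum\overline{b}$ are non-special, which pins the relevant indices to their minimal values and forces $\theta_{\Gamma,\mathbf{P_j}}\circ t_\Gamma$ to vanish to order exactly one at each point of $\mathbf{P_j}$ and nowhere else. Hence $\operatorname{div}(\theta_{\Gamma,\mathbf{P_j}}\circ t_\Gamma)=\sum_{P_i\in\mathbf{P_j}}P_i$; squaring and taking the quotient, the $g-1$ branch points common to $\mathbf{P_1}$ and $\mathbf{P_2}$ cancel, leaving $\operatorname{div}(F)=2P_l-2P_m$.

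Once the divisor is in hand the remaining steps are classical. The pullback of $(z-\lambda_l)/(z-\lambda_m)$ to $X$ has the same divisor $2P_l-2P_m$, since $z-\lambda_l$ acquires a double zero at the ramification point $P_l$, and two meromorphic functions on a curve with equal divisors differ by a nonzero constant; thus $F(P)=C\,(z(P)-\lambda_l)/(z(P)-\lambda_m)$. Evaluating at an auxiliary branch point $P_h\notin\mathbf{P_1}\cup\mathbf{P_2}$ --- where, by the divisor computation, none of the theta values vanish --- expresses $C$ through $\lambda_h$ and the theta values at $P_h$. Repeating with a second auxiliary point and combining the two identities eliminates $C$ and leaves the cross ratio of the four branch values $\lambda_h,\lambda_k,\lambda_l,\lambda_m$ on the right; this is the content of the displayed formula, in which the argument written $P_m$ should be read as the second auxiliary point, since $\theta_{\Gamma,\mathbf{P_2}}$ itself vanishes at $P_m$.

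The step I expect to be the \emph{main obstacle} is the divisor computation of the second paragraph, and within it the canonical-class bookkeeping: one must verify that translating the argument by $K_{P_0}$, together with the identification of $K_\Gamma$ with the prescribed sum of $g-1$ of the branch points $b_i$ from the theorem of Section 3, makes the zero locus of $\theta_{\Gamma,\mathbf{P_j}}\circ t_\Gamma$ equal to $\mathbf{P_j}$ on the nose, with no spurious base points and with multiplicity one throughout. This is precisely where the non-speciality of Theorem \ref{mmm} and the order-of-vanishing clause of the $p$-adic Riemann Vanishing Theorem carry the argument; once the divisor is correctly pinned down, the identification with the rational function and the elimination of $C$ are formal.
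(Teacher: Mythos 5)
Your proposal is correct and follows essentially the same route as the paper: form the quotient of squared translated theta functions $\theta_{\Gamma,\mathbf{P_1}}^2/\theta_{\Gamma,\mathbf{P_2}}^2$ composed with $t_\Gamma$, use the Riemann Vanishing Theorem together with the non-speciality corollary of Theorem \ref{mmm} to pin its divisor to $2P_l-2P_m$, identify it with $C\,(z-\lambda_l)/(z-\lambda_m)$, and eliminate $C$ by evaluating at auxiliary branch points. Your extra checks --- descent of the quotient to $X$ via the $2$-torsion of $c_{P_l,P_m}$, and reading the argument written $P_m$ on the left-hand side as the second auxiliary point $P_k$ --- only make explicit what the paper leaves implicit (including its typos), so there is no substantive difference in approach.
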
 
Note that in this case we can calculate precisely the vector that corresponds to these partitions. Indeed for each subset $\mathbf{P}\subset \left\{1,...2g+2\right\}$ define $\delta_P=\sum_{i\in \mathbf{P}}\delta_{ij}.$ We can define an element $c_P$ in $Hom\left(\Gamma,K^{star}\right)$ by $c_P=\sum_{i\in P}\gamma_i.$ Using the definition of $c_P$ we rewrite the theorem: 
Hence we obtained the following theorem: 
\begin{theorem} 
For any partitions $\mathbf{P_1},\mathbf{P_2}$ as above let the pre-images of $\lambda_l,\lambda_m$ be the unique points such that $\lambda_l\not\in \mathbf{P_2}$ and $\lambda_m\not\in \mathbf{P_1}.$ assume $\lambda_h,\lambda_k$ any points such that $\lambda_h,\lambda_l\not\in \mathbf{P_1}\cup\mathbf{P_2}.$ Define $\mathbf{P_{is}}=\mathbf{P_i}\cup \lambda_s.$ Denote by $\mathbf{O}$ the set of odd numbers $2j+1,1\leq j\leq g$ and let $\mathbf{O}_{is}=\mathbf{P_{is}}\triangle\mathbf{O}.$ ($\triangle$ is the symmetric difference of two sets) 
\begin{equation}
\frac{\theta_\Gamma (c_{\mathbf{O}_{1m}})^2\theta_\Gamma(c_{\mathbf{O}_{2h}})^2}{\theta_\Gamma (c_{\mathbf{O}_{2m}})^2\theta_\Gamma(c_{\mathbf{O}_{1h}})^2}
=\frac{\lambda_h-\lambda_m}{\lambda_h-\lambda_l}\frac{\lambda_k-\lambda_m}{\lambda_k-\lambda_l}
\end{equation}
\end{theorem}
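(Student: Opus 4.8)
The plan is to derive this statement directly from the preceding theorem by making the arguments of the shifted theta functions explicit; conceptually nothing new is proved, and essentially all the work is a bookkeeping of half-period characteristics. Recall that by definition $\theta_{\Gamma,\mathbf{P_i}}(t_\Gamma(P_s))=\theta_\Gamma\bigl(t_\Gamma(P_s)-\sum_{j\in\mathbf{P_i}}t_\Gamma(P_j)+K_{P_0}\bigr)$, and that in the previous section we identified the Riemann constant $K_{P_0}$ with the class $\sum_{j\in\mathbf{O}}t_\Gamma(P_j)$ supported on the odd-indexed branch points. First I would substitute these into the statement of the previous theorem, so that each of the four factors on the left-hand side becomes $\theta_\Gamma$ evaluated at an argument of the form $t_\Gamma(P_s)-\sum_{j\in\mathbf{P_i}}t_\Gamma(P_j)+\sum_{j\in\mathbf{O}}t_\Gamma(P_j)$, with $(i,s)$ running over $(1,m),(2,m),(2,h),(1,h)$.

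The central observation is that every class $t_\Gamma(P_j)=c_{P_j,P_0}$ is $2$-torsion modulo the period lattice $\Lambda_\Gamma$, since part (\ref{second}) of the theorem of Section~3 gives $c_{P_j,P_0}^2=c_{\gamma_j}\in\Lambda_\Gamma$. Consequently the sign with which each branch point enters the argument is immaterial modulo $\Lambda_\Gamma$, and any index occurring in two of the three index sets $\{s\}$, $\mathbf{P_i}$, $\mathbf{O}$ contributes $c_{P_j,P_0}^2\in\Lambda_\Gamma$ and may be dropped. Collecting the surviving indices yields exactly the symmetric difference $\mathbf{O}_{is}=(\mathbf{P_i}\cup\{s\})\,\triangle\,\mathbf{O}$, and the half-period $\prod_{j\in\mathbf{O}_{is}}c_{P_j,P_0}$ is by construction the characteristic $c_{\mathbf{O}_{is}}$, whose square is $c_{\sum_{j\in\mathbf{O}_{is}}\gamma_j}$. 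Thus each argument equals $c_{\mathbf{O}_{is}}$ up to a translation by a lattice element $c_{\delta_{is}}\in\Lambda_\Gamma$, and since every theta factor appears squared, the residual sign ambiguities (and the evenness of $\theta_\Gamma$) play no role.

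It remains to control the automorphy factor incurred by such a lattice translation. Using the defining cocycle relation for elements of $L(\xi)$, the value $\theta_\Gamma(c_{\mathbf{O}_{is}}\,c_{\delta_{is}})$ equals $\theta_\Gamma(c_{\mathbf{O}_{is}})$ up to the factor $\xi_{\Gamma,c_{\delta_{is}}}(c_{\mathbf{O}_{is}})=p_\Gamma(c_{\delta_{is}},c_{\delta_{is}})\,c_{\mathbf{O}_{is}}(\delta_{is})$, which is bilinear in the lattice data. Since the left-hand side of the theorem is the balanced quotient $\theta_\Gamma(c_{\mathbf{O}_{1m}})^2\theta_\Gamma(c_{\mathbf{O}_{2h}})^2\big/\theta_\Gamma(c_{\mathbf{O}_{2m}})^2\theta_\Gamma(c_{\mathbf{O}_{1h}})^2$, I would verify that the corresponding product of automorphy factors is trivial. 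Because the four index sets differ only in how the single evaluation point is weighed against the two partitions, the bilinearity of $p_\Gamma$ and the linearity of $c\mapsto c(\delta)$ force the contributions to cancel in pairs, the explicit values $c_{a_0,b_0}(\gamma_i)=-1$ and $c_{b_i,a_i}(\gamma_j)=(-1)^{\delta_{ij}}$ from parts (\ref{first}) and (\ref{fourth}) supplying the needed cancellations. Once this is checked, the left-hand side coincides with the expression of the previous theorem, and the claimed identity follows.

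I expect the cancellation of the automorphy factors to be the only genuine obstacle: the substitution and the passage to the symmetric difference are formal, but keeping track of the quadratic terms $p_\Gamma(c_{\delta_{is}},c_{\delta_{is}})$ and the pairings $c_{\mathbf{O}_{is}}(\delta_{is})$ across all four theta factors, and confirming that the balanced structure of the quotient annihilates them, is where the precise $2$-torsion values from Section~3 must be used and where any real computation lives.
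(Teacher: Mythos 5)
Your route is the same one the paper takes: the paper offers no argument for this theorem beyond the sentence ``Using the definition of $c_P$ we rewrite the theorem,'' so the whole content is exactly the translation step you describe --- replace each argument $A_{is}=t_\Gamma(P_s)-\sum_{j\in\mathbf{P_i}}t_\Gamma(P_j)+K_{P_0}$ of the previous theorem by the half-period point $c_{\mathbf{O}_{is}}=\prod_{j\in\mathbf{O}_{is}}t_\Gamma(P_j)$, at the cost of a lattice translation $c_{\delta_{is}}\in\Lambda_\Gamma$, and then argue that the four automorphy factors cancel. You are right that this cancellation is the only genuine issue, and you deserve credit for seeing it at all, since the paper passes over it in silence. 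But your resolution of it is wrong, and this is a real gap, not a bookkeeping detail. The balance $A_{1k}A_{2h}=A_{2k}A_{1h}$ kills only a translation \emph{common} to all four arguments; the four translations $\delta_{is}$ are genuinely different, and after squaring, each factor contributes $\xi_{is}^{2}=c_{\delta_{is}}(\delta_{is})\,c_{\mathbf{O}_{is}}(\delta_{is})^{2}$. Note that the polarization values $p_\Gamma(\cdot,\cdot)$ disappear upon squaring (since $p_\Gamma^2(c_\gamma,c_\delta)=c_\gamma(\delta)$), so the $\pm1$ values from parts (1) and (4) of the theorem of Section 3 cannot enter: what survives is not a sign but a product of honest off-diagonal periods $c_{\gamma_i}(\gamma_j)$, and bilinearity does not make it collapse.

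To see the failure concretely, take $g=2$ with the paper's own partitions $\mathbf{P_1}=\{1,2\}$, $\mathbf{P_2}=\{1,3\}$ (so $l=2$, $m=3$) and evaluation points $P_5,P_4$ (so $k=5$, $h=4$; one must read the previous theorem at $P_k$ rather than $P_m$, since $\theta_{\Gamma,\mathbf{P_2}}(t_\Gamma(P_m))=0$, and for $s=m\in\mathbf{P_2}$ your rule ``drop any index occurring twice'' computes $(\mathbf{P_2}\triangle\{m\})\triangle\mathbf{O}$, which contradicts the paper's $(\mathbf{P_2}\cup\{m\})\triangle\mathbf{O}$ --- but these defects are inherited from the paper). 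Writing $u_j=t_\Gamma(P_j)$, with $u_2^2=u_3^2=c_{\gamma_1}$, $u_4^2=u_5^2=c_{\gamma_2}$, $K_{P_0}=u_3u_5$, one finds the four translations
\begin{equation*}
c_{\delta_{15}}=u_1^{-2}c_{\gamma_1}^{-1}c_{\gamma_2},\qquad
c_{\delta_{25}}=u_1^{-2}c_{\gamma_2},\qquad
c_{\delta_{14}}=u_1^{-2}c_{\gamma_1}^{-1},\qquad
c_{\delta_{24}}=u_1^{-2},
\end{equation*}
and a direct evaluation of the product of the squared automorphy factors shows that all terms involving $u_1^{2}$, $c_{\gamma_1}(\gamma_1)$ and $c_{\gamma_2}(\gamma_2)$ cancel, while the cross terms do not:
\begin{equation*}
\frac{\theta_{\Gamma,\mathbf{P_1}}^{2}(t_\Gamma(P_5))\,\theta_{\Gamma,\mathbf{P_2}}^{2}(t_\Gamma(P_4))}{\theta_{\Gamma,\mathbf{P_2}}^{2}(t_\Gamma(P_5))\,\theta_{\Gamma,\mathbf{P_1}}^{2}(t_\Gamma(P_4))}
= c_{\gamma_1}(\gamma_2)^{-2}\,
\frac{\theta_\Gamma(c_{\mathbf{O}_{15}})^{2}\,\theta_\Gamma(c_{\mathbf{O}_{24}})^{2}}{\theta_\Gamma(c_{\mathbf{O}_{25}})^{2}\,\theta_\Gamma(c_{\mathbf{O}_{14}})^{2}}.
\end{equation*}
So the quotient of theta values at the points $c_{\mathbf{O}_{is}}$ reproduces the cross ratio only up to the nontrivial period $c_{\gamma_1}(\gamma_2)^{2}$. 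In general the residue is a product of factors $c_{\gamma_{(a)}}(\gamma_{(b)})^{\pm1}$ over $a\in\{k,h\}$, $b\in\{l,m\}$ whose $\mathbf{O}$-parities agree, and it is trivial only in the special configuration where $l,m$ have one parity and $k,h$ the other. This is the $p$-adic counterpart of the classical fact that theta constants with characteristics $\theta[\eta](0,\tau)$ differ from theta values at half-periods by exponential factors in $\tau$ which do not cancel in Thomae-type quotients. Consequently the identity you are trying to prove, read with plain $\theta_\Gamma$ at the points $c_{\mathbf{O}_{is}}$, is false as stated; a correct proof must either insert these explicit period prefactors into the formula or reinterpret $\theta_\Gamma(c_{\mathbf{O}_{is}})$ as a theta constant with characteristic that absorbs them, and no appeal to bilinearity or to the balanced shape of the quotient can avoid this.
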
 
\subsection{conclusion}
In this note we generalized the formulas that are well known in the classical case for the expression of the cross ratio of the branch points of Hyper-elliptic curves through theta functions. These formulas were previously known for $g=2$ \cite{T}. We replace the fundamental domain considerations of the action of $PGL_2(K)$ where $K$ is a non-Archimedean field with direct computation as in \cite{V1}. In principle we believe that these formulas can be applied in the spirit of \cite{T} to calculate the periods of all the Hyper-elliptic modular curves for higher genuses. Remarkably \cite{V1} calculated the images of the branch points for Mumford curves that are cyclic covers. Thus similar formulas should be available in that case too and further generalizaitons of  more general Thomae type formulas. We intend to pursue it in future works.

\bibliographystyle{plain}

\end{document}